\newtheorem{theorem}{Theorem}[section]
\newtheorem*{theorem'}{Theorem }
\newtheorem{prop}[theorem]{Proposition}
\newtheorem{cor}[theorem]{Corollary}
\newtheorem{lemma}[theorem]{Lemma}
\newtheorem{definition}[theorem]{Definition}
\renewcommand{\~}[1]{\overline{#1}}
\renewcommand{\geq}{\geqslant}
\renewcommand{\leq}{\leqslant}
\newcommand{\<}{\left\langle}
\renewcommand{\>}{\right\rangle}
\newcommand{\8}{\infty}
\renewcommand{\:}{\colon}
\renewcommand{\a}{\alpha}
\renewcommand{\b}{\beta}
\newcommand{\Bi}{B^1}
\renewcommand{\Cup}[2]{\underset{#1}{\overset{#2}{\cup} }}
\newcommand{\e}{\epsilon}
\newcommand{\EL}{\textnormal{EL}}
\newcommand{\F}{\Phi}
\newcommand{\g}{\gamma}
\newcommand{\G}{\Gamma}
\renewcommand{\H}{\mathcal{H}}
\newcommand{\Inf}[1]{\underset{#1}{\inf}}
\renewcommand{\int}{\varint}
\newcommand{\Isom}{\mathrm{Isom}}
\renewcommand{\L}{\Lambda}
\renewcommand{\max}[1]{\underset{#1}{\mathrm{max}}}
\newcommand{\norm}{\trianglelefteqslant}
\newcommand{\om}{\omega}
\newcommand{\PSL}{\textnormal{PSL}}
\newcommand{\R}{\mathcal{R}}
\renewcommand{\Re}{\mathbb{R}}
\newcommand{\SL}{\textnormal{SL}}
\newcommand{\Sum}[2]{\underset{#1}{\overset{#2}{\sum} }}
\newcommand{\Un}{\mbox{$\mathcal{U}$}}
\newcommand{\Z}{\mathbb{Z}}
\newcommand{\Zi}{Z^1}
\title{Relative Property (T) and the Vanishing of the first $\ell^2$-Betti number}
\author{Talia Fern\'os}
\address{Hebrew University, Jerusalem Israel %\\
%\\
 %\\ 
}
\email{fernos@math.huji.ac.il}
\begin{document}
\maketitle

%%%%%%%%%%%%%%%%%%%%%%%%%%%%%%%%%%%%%

\begin{abstract}
In this paper, we show that certain families with relative property (T) have trivial first $\ell^2$-Betti number. We apply this to the elementary matrix group $\EL_n(\R)$ where $\R$ is any countable unital ring of characteristic 0.
\end{abstract}

%%%%%%%%%%%%%%%%%%%%%%%%%%%%%%%%%%%%%%%
\section{Introduction} 

The notion of $\ell^2$-Betti numbers has been studied for a variety of objects from groups to measurable folliations \cite{Atiyah}, \cite{CG}, \cite{Connes}. They were studied by D. Gaboriau who showed that if the (probability measure preserving and free) actions of two groups are orbit equivalent then their $\ell^2$-Betti numbers are the same \cite{Gab02}. Namely, $\ell^2$-Betti numbers are an invariant of orbit equivalence.

A generalization of the usual Betti numbers, $\ell^2$-Betti numbers count the ``module-dimension" of certain infinite dimensional cohomology groups.  See \cite{Gaboriau} for a brief introduction to $\ell^2$-Betti numbers for groups and equivalence relations.

Our objective here is to prove the following:

\begin{theorem}\label{Th RelT ell2 Betti}
Let $(\G, A)$ be a group pair with relative property (T). If $A$ is infinite  and weakly normal or weakly quasi-normal in $\G$ then $\b_1(\G) = 0$.
\end{theorem}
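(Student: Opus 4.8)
The plan is to prove the stronger statement that the \emph{unreduced} cohomology $H^1(\G,\ell^2\G)$ with coefficients in the left regular representation $\lambda$ already vanishes. Since reduced cohomology $\overline{H}^1(\G,\ell^2\G)=Z^1/\overline{B^1}$ is a quotient of $H^1=Z^1/B^1$ (as $B^1\subseteq\overline{B^1}$), this forces $\overline{H}^1(\G,\ell^2\G)=0$ and hence $\b_1(\G)=\dim_{\mathcal L\G}\overline{H}^1(\G,\ell^2\G)=0$. So fix an arbitrary cocycle $b\in Z^1(\G,\ell^2\G)$; the goal is to produce $\xi\in\ell^2\G$ with $b=d\xi$, where $d\xi(g)=\xi-\lambda(g)\xi$.

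First I would invoke the cocycle (Delorme--Guichardet-type) characterization of relative property (T): because $(\G,A)$ has relative property (T), the affine isometric $\G$-action associated to $b$ has bounded $A$-orbits, i.e. $\sup_{a\in A}\|b(a)\|<\infty$. A bounded orbit of an affine isometric action on a Hilbert space admits a circumcenter, so there is $\xi\in\ell^2\G$ fixed by the affine $A$-action; equivalently $b(a)=\xi-\lambda(a)\xi$ for every $a\in A$. Replacing $b$ by $b':=b-d\xi$, which is cohomologous to $b$ through a genuine coboundary, I may assume from the outset that $b'|_A=0$.

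The heart of the argument is a transfinite induction up the defining chain $A=A_0\trianglelefteq A_1\trianglelefteq\cdots\trianglelefteq A_\tau=\G$ (respectively the commensurating chain in the weakly quasi-normal case, with unions taken at limit ordinals), proving $b'|_{A_\beta}=0$ for all $\beta\le\tau$; crucially, \emph{no} further coboundary correction is needed beyond the base step. For the successor step, suppose $b'|_{A_\beta}=0$ and fix $g\in A_{\beta+1}$. In the weakly normal case, normality gives $gag^{-1}\in A_\beta$ for every $a\in A_\beta$; expanding $0=b'(gag^{-1})$ via the cocycle identity together with $b'(g^{-1})=-\lambda(g^{-1})b'(g)$ yields $\lambda(gag^{-1})b'(g)=b'(g)$. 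Thus $b'(g)$ is invariant under the subgroup $gA_\beta g^{-1}$, which is infinite since $A\subseteq A_\beta$; as the regular representation of $\G$ has no nonzero vector invariant under any infinite subgroup, $b'(g)=0$. In the weakly quasi-normal case one runs the identical computation over the finite-index, hence still infinite, subgroup $A_\beta\cap g^{-1}A_\beta g$, concluding that $b'(g)$ is fixed by the infinite group $gA_\beta g^{-1}\cap A_\beta$ and is therefore $0$. At a limit ordinal $A_\beta=\bigcup_{\gamma<\beta}A_\gamma$, so $b'|_{A_\beta}=0$ is immediate. Taking $\beta=\tau$ gives $b'\equiv0$, i.e. $b=d\xi$, which completes the proof.

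The step I expect to demand the most care is packaging the two external inputs cleanly: the characterization of relative property (T) in the precise form ``every $1$-cocycle is bounded on $A$'', and the identification $\b_1(\G)=\dim_{\mathcal L\G}\overline{H}^1(\G,\ell^2\G)$ together with the reduction to unreduced $H^1$. The genuinely load-bearing hypotheses are that $A$ is \emph{infinite} (so that $\ell^2\G$ carries no nonzero vector invariant under $A$, nor under $gA_\beta g^{-1}\cap A_\beta$) and that the chain terminates at $\G$; I would double-check the limit-ordinal bookkeeping to confirm that the single circumcenter $\xi$ chosen at the base step genuinely propagates the vanishing all the way up.
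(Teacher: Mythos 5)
Your proposal is correct and follows essentially the same route as the paper: kill the cocycle on $A$ via the relative Delorme--Guichardet theorem, then run a transfinite induction up the defining chain using the conjugation identity $\lambda(gag^{-1})b'(g)=b'(g)$ and the fact that $\ell^2(\Gamma)$ has no nonzero vector invariant under an infinite subgroup (the paper packages this as two propositions comparing $H^1(\Gamma,A,\pi)$ with $H^1(\Gamma,\pi)$, tracking the implementing vector $v$ instead of normalizing it to $0$ once, and passes to $\beta_1$ via Bekka--Valette's Corollary 4 rather than your surjection onto reduced cohomology). One small correction: Popa's weak quasi-normality only guarantees that $N_{t-1}\cap gN_{t-1}g^{-1}$ is \emph{infinite}, not of finite index in $N_{t-1}$ --- but infiniteness is all your argument actually uses, so nothing breaks.
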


{\bf{Remark:}} In the case when $A$ is normal in $\G$ the above theorem follows from Proposition 3.5.1 in Florian Martin's thesis \cite{Martin}. The proposition reads: if $G$ is locally compact and $N$ is a closed normal subgroup such that $(G,N)$ has relative (T), then $H^1(G,\pi)=0$ for every representation $\pi$ without $N$-invariant vectors.

Recall the following two definitions:

\begin{definition}
A subgroup $A \leq \G$ is said to be weakly normal if there exists a well ordered (by inclusion) family of intermediate subgroups $\{N_t : 0 \leq t \leq \tau\}$ of $\G$ with $N_0 = A$ and $N_\tau= \G$ and such that for each $0 < t \leq \tau$ the following inclusion is normal:
\begin{equation}
\nonumber
\Cup{s < t}{}N_s \norm N_t
\end{equation}
\end{definition}

\begin{definition}[Popa]
A subgroup $A \leq \G$ is said to be weakly quasi-normal if there exists a well ordered (by inclusion) family of intermediate subgroups $\{N_t : 0 \leq t \leq \tau\}$ of $\G$ with $N_0 = A$ and $N_\tau= \G$ and 
\begin{itemize}
\item for each non-limit ordinal $0 < t \leq \tau$ the subgroup $N_t = \< \g \: |N_{t-1} \cap \g N_{t-1} \g^{-1}| =\8 \>$;
\item for each limit ordinal $0 < t \leq \tau$ the subgroup $N_t =\Cup{s < t}{}N_s $.
\end{itemize}
\end{definition}

Since many of the family of examples of nontrivial group pairs with relative property (T) are of the form $(\G,A)$ where $A$ is normal and amenable \cite{Valette}, \cite{Fernos2006}, Theorem \ref{Th RelT ell2 Betti} can be seen as a natural extension of the following:

\begin{theorem}\label{Th ell2 Betti numbers amenable and T}{\cite[Corollary 6, Theorem D]{BV}}
If $\G$ is a group satisfying either of the following two conditions then $\b_1(\G)= 0$:
\begin{enumerate}
  \item $\G$ has property (T);
  \item $\G$ is finitely generated and contains a normal amenable subgroup.
\end{enumerate}
\end{theorem}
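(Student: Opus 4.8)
The whole statement reduces to a single vanishing assertion about reduced cohomology with coefficients in the left regular representation $\lambda$ of $\G$ on $\ell^2(\G)$. The engine I would invoke first, which is the analytic core of \cite{BV}, is the identification
\begin{equation*}
\b_1(\G) \;=\; \dim_{\mathcal{L}\G}\,\overline{H}^1(\G,\ell^2\G),
\end{equation*}
the von Neumann dimension over the group von Neumann algebra $\mathcal{L}\G$ of the first reduced cohomology of $\G$ with coefficients in $\lambda$. Since the module $\overline{H}^1(\G,\lambda)$ being trivial forces this dimension to vanish, in both cases it suffices to prove $\overline{H}^1(\G,\lambda)=0$, i.e.\ that every $\lambda$-cocycle is a limit, uniformly on finite subsets of $\G$, of coboundaries $g \mapsto \xi - \lambda(g)\xi$.

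\textbf{Case (1): property (T).} Here I would simply quote the Delorme--Guichardet theorem: a group with property (T) satisfies $H^1(\G,\pi)=0$ for \emph{every} unitary representation $\pi$. Applying this to $\pi=\lambda$ gives $H^1(\G,\lambda)=0$, and since the reduced group $\overline{H}^1(\G,\lambda)$ is a quotient of the unreduced $H^1(\G,\lambda)$ (one enlarges the coboundaries to their closure), it too vanishes. Hence $\b_1(\G)=0$. This case is soft and needs no further work.

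\textbf{Case (2): an infinite amenable normal subgroup.} Let $N \norm \G$ be amenable and infinite; the infinitude is the substantive hypothesis, a finite normal subgroup giving nothing, as free groups already show. Fix a finite symmetric generating set of $\G$. I would run the reduced-cohomology analogue of the inflation--restriction sequence attached to $N \norm \G$ (as developed for Hilbert-module coefficients in Shalom's work). Two inputs drive it. First, because $N$ is infinite, $\ell^2(\G)$ has no nonzero $N$-invariant vectors: a vector fixed by $\lambda(N)$ is constant on the infinite right cosets $Ng$, hence lies in $\ell^2$ only if it is zero. Thus the inflation term $\overline{H}^1\big(\G/N,\,(\ell^2\G)^N\big)$ vanishes, so restriction to $N$ is injective on $\overline{H}^1(\G,\lambda)$, and it suffices to show $[b|_N]=0$ for every $\lambda$-cocycle $b$. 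Second, $b|_N$ is a reduced $1$-cocycle of the infinite amenable group $N$ with coefficients in $\lambda|_N$, a multiple of the regular representation $\lambda_N$; by Cheeger--Gromov one has $\b_1(N)=0$, so $\overline{H}^1(N,\lambda_N)=0$.

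\textbf{The main obstacle.} The crux is closing the gap between ``$\overline{H}^1(N,\lambda_N)=0$'' and ``$[b|_N]=0$'' when $[\G:N]=\8$, for then $\lambda|_N$ is an \emph{infinite} multiple of $\lambda_N$ and reduced cohomology does not commute with infinite direct sums. This is precisely where finite generation of $\G$ is essential and where I would pass to a harmonic representative: by the Bekka--Valette theorem each class in $\overline{H}^1(\G,\lambda)$ has a unique harmonic representative $b$ relative to a finitely supported symmetric generating measure. Exploiting this harmonicity together with the normality of $N$ and an invariant mean on $N$, one averages the $N$-orbit of $b$; harmonicity supplies the boundedness needed to extract an $N$-fixed vector, forcing $[b|_N]=0$. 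By the injectivity above, $[b]=0$, so $\overline{H}^1(\G,\lambda)=0$ and $\b_1(\G)=0$. I expect this averaging step --- transferring vanishing from $N$ up to $\G$ in the absence of any spectral gap for $\lambda|_N$, so that a soft fixed-point argument is unavailable --- to be the only genuinely hard point; the property (T) case, by contrast, is immediate.
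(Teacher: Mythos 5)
The paper never proves this statement: it is quoted verbatim from Bekka--Valette \cite{BV} (their Corollary 6 and Theorem D) as motivation for Theorem~\ref{Th RelT ell2 Betti}, so there is no internal proof to compare against, and your sketch has to be judged on its own merits as a reconstruction of the BV argument. Your case (1) is correct and is indeed soft: Delorme--Guichardet gives $H^1(\G,\pi)=0$ for all $\pi$, the reduced group $\overline{H}^1(\G,\lambda_\G)$ is a quotient of $H^1(\G,\lambda_\G)$, and the identification $\b_1(\G)=\dim_{\mathcal{L}\G}\overline{H}^1(\G,\ell^2(\G))$ applies since countable property (T) groups are finitely generated. You also correctly flagged that condition (2) as printed omits the word \emph{infinite} (free groups, with the trivial normal subgroup, show the literal statement is false); in \cite{BV} the amenable normal subgroup is assumed infinite.

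Case (2), however, contains a genuine gap, in two places. First, the injectivity of restriction on \emph{reduced} cohomology does not follow from $(\ell^2(\G))^N=0$: inflation--restriction is an exactness statement about unreduced cohomology, and it is exactly here that it breaks, because $\lambda_\G|_N$ is a multiple of $\lambda_N$ with $N$ infinite amenable, so $\lambda_\G|_N$ has almost invariant vectors and $\Bi(N,\lambda_\G|_N)$ is not closed; if $b|_N$ lies only in the closure of the coboundaries you cannot subtract a coboundary to make $b$ vanish on $N$, and the appeal to ``Shalom's work'' needs hypotheses you do not have. The standard repair is to assume $\G$ nonamenable (the amenable case being Cheeger--Gromov), whence $\Bi(\G,\lambda_\G)$ is closed and $\overline{H}^1=H^1$; then the unreduced injectivity you need is precisely Lemma~\ref{cocind} of this paper applied with $N \triangleleft \G$ and $(\ell^2(\G))^N=0$ --- but this requires \emph{honest} vanishing, $b|_N\in \Bi(N,\lambda_\G|_N)$, which $\b_1(N)=0$ does not deliver. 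Second, the step meant to produce exactly that --- ``harmonicity supplies the boundedness needed to extract an $N$-fixed vector'' --- is asserted rather than argued, and as written it is internally inconsistent: by your own observation $\ell^2(\G)$ has no nonzero $N$-fixed vectors, so what you must mean is a fixed point of the \emph{affine} $N$-action $\a_b|_N$, equivalently boundedness of $b$ on $N$; amenability alone gives no affine fixed point ($\Z$ translating on $\Re$), and no reason is given why a harmonic cocycle must be bounded on an infinite normal amenable subgroup. (A further wrinkle: $N$ need not be finitely generated, so even the dictionary between $\b_1(N)=0$ and $\overline{H}^1(N,\lambda_N)=0$ needs care.) In short, you located the crux accurately --- the failure of reduced cohomology to commute with the infinite multiple $\lambda_\G|_N\cong\bigoplus\lambda_N$ --- but the proposal stops precisely where Bekka--Valette's harmonic-function analysis does the work, so it is a strategy outline with the decisive step missing.
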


Examples of groups having $\b_1 > 0$ are lattices (both uniform and nonuniform) in $\PSL_2(\Re)$ \cite{BV}{Example p. 318}. For a large class of examples see \cite{PetThom}.  

I would like to thank Alain Valette for reading this and making useful suggestions. This work was part of my doctoral thesis.

\section{Some preliminaries}

The celebrated theorem of Delorme and Guichardet says that property (T) is equivalent to property FH. The same is true for relative property (T) and it is through this fact that we will prove our theorem.

 \begin{definition}
A group pair $(\G, A)$ has relative property FH if every representation $\a \: \G \to \Isom(\H)$ on a Hilbert space $\H$, has an $A$-fixed point $v \in \H$.
\end{definition}

We will be thinking of the affine Hilbert spaces as vector spaces with an affine structure. By this we mean that we choose once and for all, an origin $0 \in \H$. This is equivalent to choosing an isomorphism $\Isom(\H) \cong \Un(\H) \ltimes\H$. 
 
\begin{theorem}[\cite{Jolissaint05},\cite{Cornullier},\cite{FernosThesis}]\label{relTrelFH} A group pair $(\G,A)$ has relative property (T) if and only if it has relative property FH.
\end{theorem}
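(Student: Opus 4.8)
The plan is to prove the two implications separately, both in the language of conditionally negative definite functions and affine isometric actions, following the classical Delorme--Guichardet theorem. The basic dictionary is this: an affine isometric action $\a\colon\G\to\Isom(\H)$ decomposes as $\a(g)v=\pi(g)v+b(g)$, where $\pi\colon\G\to\Un(\H)$ is its unitary part and $b\colon\G\to\H$ is a $1$-cocycle, meaning $b(gh)=b(g)+\pi(g)b(h)$. A point $v_0$ is $A$-fixed if and only if $b(a)=(1-\pi(a))v_0$ for every $a\in A$, and by the usual circumcenter argument --- a nonempty bounded subset of $\H$ has a unique Chebyshev center, which is then preserved by every isometry stabilizing the set --- such a $v_0$ exists if and only if the orbit $\{b(a):a\in A\}$ is bounded. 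Thus relative property FH is equivalent to the statement that $b|_A$ is bounded for every cocycle $b$; and since every continuous normalized conditionally negative definite function $\psi$ on $\G$ is realized as $\psi=\|b(\.)\|^2$ for some cocycle $b$ via the GNS construction, relative FH is in turn equivalent to the boundedness on $A$ of every conditionally negative definite function.

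For the implication that relative property (T) implies relative FH, I would argue by contradiction. Suppose some affine action has cocycle $b$ with $b|_A$ unbounded, and set $\psi=\|b(\.)\|^2$, a continuous conditionally negative definite function. By Schoenberg's theorem each $\f_t=\exp(-t\psi)$ with $t>0$ is a normalized continuous positive definite function; let $(\pi_t,\H_t,\xi_t)$ be the cyclic representation it generates, so that $\|\pi_t(g)\xi_t-\xi_t\|^2=2(1-\f_t(g))$. Because $\psi$ is bounded on compact sets, $\f_t\to1$ uniformly on compacta as $t\to0^+$, so for any prescribed Kazhdan pair the vacuum vectors $\xi_t$ become almost invariant once $t$ is small. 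Relative property (T) then forces $\pi_t$ to carry a nonzero $A$-invariant vector for all sufficiently small $t$. To reach a contradiction I would use the standard fact that the Gaussian (symmetric Fock space) representation attached to the pair $(\pi,b)$ has a nonzero $A$-invariant vector exactly when $b|_A$ is bounded; as each $\pi_t$ sits inside this Gaussian representation, the $A$-invariant vectors just produced are incompatible with the assumed unboundedness of $b|_A$.

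The reverse implication I would also prove contrapositively, and this is the part I expect to be the main obstacle. If $(\G,A)$ fails relative property (T), then there is a unitary representation $\pi$ of $\G$ that weakly contains the trivial representation --- it has almost invariant vectors --- yet has no nonzero $A$-invariant vector: concretely one sets $\pi=\bigoplus_n\pi_n$, where along an exhaustion $Q_1\subseteq Q_2\subseteq\cdots$ of $\G$ by compact subsets with $\e_n\to0$ each $\pi_n$ has a $(Q_n,\e_n)$-almost invariant unit vector but no nonzero $A$-invariant vector. Here I would invoke Guichardet's theorem --- available because the countable group $\G$ is $\s$-compact --- that the coboundaries $\Bi(\G,\pi)$ are dense in the cocycles $\Zi(\G,\pi)$, in the topology of uniform convergence on compact sets, if and only if the trivial representation is not weakly contained in $\pi$. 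Since here it is weakly contained, there is a cocycle $b\in\Zi(\G,\pi)$ that is not approximable by coboundaries, and in particular has unbounded orbit. The delicate and essential point is to guarantee that this unboundedness is already visible on $A$, that is, to produce a cocycle whose restriction to $A$ is unbounded; this is exactly where the hypothesis that $\pi$ has no $A$-invariant vector, rather than no invariant vector at all, must be fed into Guichardet's density argument, and establishing this relative refinement of the density criterion is the crux of the proof. Granting it, the associated affine action has no $A$-fixed point, so relative FH fails, which completes the contrapositive and the theorem.
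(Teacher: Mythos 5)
The paper does not actually prove Theorem \ref{relTrelFH}; it quotes it from Jolissaint, de Cornulier and the author's thesis, so your proposal can only be judged on its own terms. Your forward implication (relative (T) $\Rightarrow$ relative FH) is the standard Delorme argument and is essentially sound: the circumcenter lemma reduces relative FH to boundedness of $b|_A$, Schoenberg gives the positive definite functions $\f_t=e^{-t\|b\|^2}$ whose GNS vacua are almost invariant for small $t$, relative (T) yields nonzero $A$-invariant vectors in $\pi_t$, and since $\pi_t$ is the cyclic subrepresentation generated by the vacuum in the Gaussian representation of the rescaled affine action, the fact that this Gaussian representation restricted to $A$ has a nonzero invariant vector precisely when $b|_A$ is bounded closes the loop. (For a countable but not finitely generated $\G$ one should replace ``a prescribed Kazhdan pair'' by the direct sum $\bigoplus_n\pi_{1/n}$, but that is routine.)

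The reverse implication contains a genuine, and indeed self-acknowledged, gap at exactly the hard point. You fix a representation $\pi$ with almost invariant vectors and no nonzero $A$-invariant vectors and then appeal to an unproved ``relative refinement of Guichardet's density criterion'' to produce a cocycle unbounded on $A$. This is not a cosmetic strengthening of Guichardet: a cocycle in $\Zi(\G,\pi)$ unbounded on $A$ is precisely a witness to $H^1(\G,A,\pi)\neq 0$, which is the statement you are trying to establish, so as written the argument is circular at its crux. Moreover the skeleton is misdirected: there is no reason the particular $\pi$ you assembled should carry an $A$-unbounded cocycle at all; the representation supporting the fixed-point-free affine action must be manufactured together with the cocycle. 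The cited sources instead argue through positive definite functions: failure of relative (T) gives normalized positive definite $\f_n\to 1$ uniformly on finite sets while $\sup_{a\in A}|1-\f_n(a)|\geq\e$ for all $n$; Delorme's summation trick then chooses constants $c_n\to\8$ with $\sum_n c_n\sup_{K_n}|1-\f_n|<\8$ along an exhaustion $(K_n)$ of $\G$, so that $\psi=\sum_n c_n(1-\mathrm{Re}\,\f_n)$ is a well-defined conditionally negative definite function that is unbounded on $A$; the affine action obtained from $\psi$ by the GNS construction for conditionally negative definite functions then has no $A$-fixed point, refuting relative FH. Replacing your Guichardet step by this construction is what is needed to complete the proof.
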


%%%%%%%%%%%%%%%%%%%%%%%%%%%%%%%%%%%%%
 
\subsection{Affine Actions and Relative Cohomology}
 
Let $\H$ be a Hilbert space and $\pi \: \G \to \Un(\H)$ be a unitary representation of $\G$. We will need to recall some definitions: 

\begin{definition}

\begin{enumerate}
  \item The space of $\pi$-cocycles is:
\begin{equation}
\Zi(\G, \pi) := \{b \: \G \to \H | b(\g_1\g_2) = b(\g_1) + \pi(\g_1)b(\g_2)\} \nonumber
\end{equation}
  \item The space of $\pi$-relative-coboundaries is:
\begin{equation}
\Bi(\G, A, \pi) := \{b \in \Zi | b(a) = \pi(a) v - v \text{ for some } v \in \H \text{ and for all } a \in A\}.
 \nonumber
\end{equation}
  \item The first cohomology is the quotient:
\begin{equation}
H^1(\G,\pi) : = \Zi(\G, \pi)/ \Bi(\G,\G, \pi).
\nonumber
\end{equation}
\item The relative first cohomology is the quotient:
\begin{equation}
H^1(\G,A, \pi) : = \Zi(\G, \pi)/ \Bi(\G,A, \pi).
\nonumber
\end{equation}
\end{enumerate}
\end{definition}

Observe that to each cocycle $b \in \Zi(\G,\pi)$ there corresponds an isometric action 
\begin{equation}
\nonumber
\a_b(\g)(v) := \pi(\g)v + b(\g).
\end{equation}

The cocycle identity assures that this is an action. We will refer to this action as the one induced by $b$ and will often drop the subscript if there is no ambiguity. Conversely, having fixed an origin $0 \in \H$, any isometric action $\a \: \G \to \Isom(\H)$ gives rise to the map $b_\a(\g) := \a(\g)(0)$ which is guaranteed to be a cocycle by the fact that the action is affine.

The first cohomology $H^1(\G, \pi)$ is developed in order to understand what kinds of affine $\G$-actions can arise with linear part $\pi$, modulo the trivial ones (i.e. those arising out of the coboundaries in $\Bi(\G,\G, \pi)$). Analogously, one may understand $H^1(\G,A, \pi)$ as the possible ways of assembling an isometric action of $\G$, with linear part $\pi$ which are trivial relative to $A$. It is an easy exercise to show that a cocycle is a coboundary on $A$ exactly when it is bounded on $A$.

We can now rephrase Theorem \ref{relTrelFH} in terms of relative cohomology:

\begin{theorem}
A group pair $(\G, A)$ has relative property (T) if and only if $H^1(\G,A, \pi)=0$ for every unitary $\G$-representation $\pi$. 
\end{theorem}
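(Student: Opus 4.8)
The plan is to show the equivalence between relative property (T), relative property FH (already established in Theorem \ref{relTrelFH}), and the vanishing of relative cohomology, by unpacking what it means for a cocycle to be a relative coboundary. The key observation, which the text flags as ``an easy exercise,'' is the following dictionary: a cocycle $b \in \Zi(\G,\pi)$ lies in $\Bi(\G,A,\pi)$ precisely when $b$ is bounded on $A$, which in turn is precisely when the affine action $\a_b$ induced by $b$ admits an $A$-fixed point. I would make this three-way correspondence the backbone of the argument.

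First I would prove the analytic step: for $b \in \Zi(\G,\pi)$, the restriction $b|_A$ is bounded on $A$ if and only if the affine $\a_b(A)$-action has a fixed point. The backward direction is immediate, since if $v$ is an $A$-fixed point then $b(a) = \a_b(a)(0) - 0$, wait, more precisely $\a_b(a)(v) = v$ rearranges to $b(a) = v - \pi(a)v$, so $\|b(a)\| \leq 2\|v\|$ is bounded. For the forward direction, suppose $b|_A$ is bounded; then the orbit $\{\a_b(a)(0) : a \in A\} = \{b(a) : a \in A\}$ is a bounded subset of $\H$, and I would invoke the classical Bruhat--Tikhonov fixed point argument: the closed convex hull of a bounded orbit is a nonempty bounded closed convex set preserved by the affine isometric $A$-action, so its (unique) circumcenter is an $A$-fixed point $v$. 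Then $\a_b(a)(v) = v$ for all $a \in A$ gives $b(a) = v - \pi(a)v = \pi(a)(-v) - (-v)$, exhibiting $b$ as a coboundary on $A$ with the vector $-v$, i.e. $b \in \Bi(\G,A,\pi)$.

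With this lemma in hand the theorem is a direct translation. For the forward implication, assume $(\G,A)$ has relative property (T), hence relative property FH by Theorem \ref{relTrelFH}. Given any unitary representation $\pi$ and any $b \in \Zi(\G,\pi)$, the induced isometric action $\a_b \colon \G \to \Isom(\H)$ has, by relative FH, an $A$-fixed point; by the lemma this makes $b$ a relative coboundary, so $H^1(\G,A,\pi) = 0$. For the converse, assume $H^1(\G,A,\pi) = 0$ for every unitary $\pi$; given an arbitrary affine isometric action $\a \colon \G \to \Isom(\H)$ with linear part $\pi$, form the associated cocycle $b_\a(\g) = \a(\g)(0)$. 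Vanishing of $H^1(\G,A,\pi)$ places $b_\a$ in $\Bi(\G,A,\pi)$, so by the lemma $\a$ has an $A$-fixed point; since $\a$ was arbitrary this is exactly relative property FH, and Theorem \ref{relTrelFH} returns relative property (T).

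The only genuine content is the analytic lemma, and within it the forward direction (bounded orbit $\Rightarrow$ fixed point). This is the step I expect to be the main obstacle, though it is a standard consequence of the geometry of Hilbert space: the map $w \mapsto \sup_{a \in A}\|w - b(a)\|^2$ is a proper, strictly convex, continuous function on $\H$ whose unique minimizer is the circumcenter, and $A$-invariance of the bounded set $\{b(a)\}$ (up to the affine action) forces the minimizer to be fixed. I would state this cleanly, since the two implications of the theorem are then purely formal applications of the dictionary together with the already-quoted Delorme--Guichardet-type equivalence in Theorem \ref{relTrelFH}.
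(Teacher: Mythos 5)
Your proof is correct and follows exactly the route the paper intends: the paper states this theorem as a direct rephrasing of Theorem \ref{relTrelFH} via the cocycle/affine-action dictionary, leaving the coboundary--bounded-orbit--fixed-point equivalence as the ``easy exercise,'' which you have filled in with the standard circumcenter argument. (Note that the fixed-point/relative-coboundary correspondence is in fact a purely algebraic bijection via $v \mapsto -v$, so the Bruhat circumcenter step, while correct, is only needed for the boundedness characterization and not for the theorem itself.)
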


%%%%%%%%%%%%%%%%

\section{Proof of Theorem \ref{Th RelT ell2 Betti}}

Theorem \ref{Th RelT ell2 Betti}  follows from three results\footnote{It is well known that for $(\G,A)$ to have relative property (T), with $A$ infinite, it is necessary for $\G$ to be nonamenable. See for example \cite{Fernos2006}.}:

\begin{theorem}[\cite{BV}{Corollary 4}]
Let $\lambda_\G \: \G \to \Un(\ell^2(\G))$ be the left regular representation of $\G$. If $\G$ is nonamenable and $H^1(\G, \lambda_\G)=0$ then $\b_1(\G) = 0$. 
\end{theorem}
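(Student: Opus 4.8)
The plan is to deduce this vanishing criterion from the standard identification of the first $\ell^2$-Betti number with a von Neumann dimension of \emph{reduced} cohomology, after which the hypothesis $H^1(\G,\lambda_\G)=0$ kills that reduced cohomology outright. Write $L\G$ for the group von Neumann algebra and set
\begin{equation}
\overline{H}^1(\G,\lambda_\G) := \Zi(\G,\lambda_\G)\big/\,\overline{\Bi(\G,\G,\lambda_\G)}, \nonumber
\end{equation}
the quotient of the cocycles by the \emph{closure} of the coboundaries, taken in the topology that makes $\overline{H}^1$ a Hilbert $L\G$-module. The input I would import is the Cheeger--Gromov/L\"uck formula $\b_1(\G)=\dim_{L\G}\overline{H}^1(\G,\lambda_\G)$, so that it suffices to prove $\overline{H}^1(\G,\lambda_\G)=0$.

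The heart of the matter is the comparison between unreduced and reduced cohomology. Since the coboundaries sit inside their own closure, $\Bi(\G,\G,\lambda_\G)\subseteq\overline{\Bi(\G,\G,\lambda_\G)}$, the reduced cohomology is a quotient of the unreduced one; hence the assumption $H^1(\G,\lambda_\G)=0$---that every cocycle is genuinely a coboundary---forces $\overline{H}^1(\G,\lambda_\G)=0$ at once, and the dimension formula then gives $\b_1(\G)=\dim_{L\G}(0)=0$. The role of nonamenability is to promote this one-way implication to the full equivalence recorded in \cite{BV}: when $\G$ is nonamenable, $\lambda_\G$ has no nonzero invariant vectors and does not weakly contain the trivial representation, so by Guichardet's closed-range criterion the coboundaries are already closed and $H^1(\G,\lambda_\G)=\overline{H}^1(\G,\lambda_\G)$; this is what is needed for the converse $\b_1(\G)=0\Rightarrow H^1(\G,\lambda_\G)=0$, whereas for the implication stated here it is subsumed by the elementary inclusion above.

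The step I expect to be the genuine obstacle is justifying the dimension formula $\b_1(\G)=\dim_{L\G}\overline{H}^1(\G,\lambda_\G)$ itself: this is the substantive content behind \cite{BV}, resting on the Cheeger--Gromov definition of $\ell^2$-Betti numbers via an $\ell^2$-cochain complex on a model for $E\G$ together with L\"uck's theory of von Neumann dimension; once it is in hand the remainder is a two-line comparison of quotients. An equivalent and perhaps cleaner route replaces reduced cohomology by the Hodge-type orthogonal decomposition $\Zi(\G,\lambda_\G)=\overline{\Bi(\G,\G,\lambda_\G)}\oplus\mathcal{H}^1$, where $\mathcal{H}^1$ is the space of harmonic $1$-cocycles; then $\overline{H}^1(\G,\lambda_\G)\cong\mathcal{H}^1$ and $\b_1(\G)=\dim_{L\G}\mathcal{H}^1$, and the hypothesis $H^1(\G,\lambda_\G)=0$ again forces $\mathcal{H}^1=0$, whence $\b_1(\G)=0$.
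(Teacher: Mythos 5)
The paper offers no proof of this statement: it is imported verbatim from Bekka--Valette \cite{BV} as one of the three external ingredients feeding into Theorem \ref{Th RelT ell2 Betti}, so there is no internal argument to compare yours against. Judged on its own terms, your argument is sound and is essentially the standard one. The single substantive input is the identification $\b_1(\G)=\dim_{L\G}\overline{H}^1(\G,\lambda_\G)$ of the first $\ell^2$-Betti number with the von Neumann dimension of the \emph{reduced} first cohomology (valid for infinite countable $\G$, and nonamenability certainly gives infiniteness); granting that, the inclusion $\Bi(\G,\G,\lambda_\G)\subseteq\overline{\Bi(\G,\G,\lambda_\G)}$ makes reduced cohomology a quotient of unreduced, so $H^1(\G,\lambda_\G)=0$ forces $\overline{H}^1(\G,\lambda_\G)=0$ and hence $\b_1(\G)=0$. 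You are also right that nonamenability is genuinely needed only for the converse implication, via Guichardet's closed-range criterion and the fact that $\lambda_\G$ fails to weakly contain the trivial representation exactly when $\G$ is nonamenable; that converse is the content of the equivalence in \cite{BV} but is not used in this paper. The one phrase I would tighten is ``the topology that makes $\overline{H}^1$ a Hilbert $L\G$-module'': the closure of the coboundaries should be taken in the topology of pointwise convergence on $\Zi(\G,\lambda_\G)$ (equivalently, for finitely generated $\G$, in the norm $\sup_{s\in S}\|c(s)\|$), and the Hilbert module structure on the quotient is a consequence rather than the definition of the topology; since $\Bi\subseteq\overline{\Bi}$ holds in any topology, this imprecision does not affect the argument, but as written it reads as circular.
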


\begin{prop}\label{wn}
Let $(\G,A)$ be a group-subgroup pair, and $A$ infinite and weakly normal  in $\G$. If $\pi \: \G \to \Un(\H)$ is a unitary representation with no nontrivial $A$-invariant vectors and $H^1(\G,A, \pi)=0$ then $ H^1(\G, \pi) = 0$.
\end{prop}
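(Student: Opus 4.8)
The plan is to recast the statement in terms of fixed points of affine actions and then to propagate a single fixed point through the weakly normal chain $\{N_t : 0 \leq t \leq \tau\}$ by transfinite induction. Fix a cocycle $b \in \Zi(\G,\pi)$ and let $\a_b$ be the induced affine action, $\a_b(\g)v = \pi(\g)v + b(\g)$. Showing $H^1(\G,\pi)=0$ amounts to showing every such $b$ lies in $\Bi(\G,\G,\pi)$; since the condition $b(\g) = \pi(\g)w - w$ for all $\g$ is equivalent to $-w$ being a global fixed point of $\a_b$, it suffices to produce a point fixed by all of $\G$. Dually, the hypothesis $H^1(\G,A,\pi)=0$ says exactly that $b \in \Bi(\G,A,\pi)$, i.e. $\a_b$ already admits an $A$-fixed point $v_0$.

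The key observation is a uniqueness statement coming from the absence of invariant vectors. For any intermediate subgroup $A \leq H \leq \G$ the linear representation $\pi|_H$ has no nonzero invariant vectors, since $A \leq H$ and $\pi$ has none over $A$. Hence the fixed-point set of $\a_b|_H$ is either empty or an affine subspace whose direction is the space of $H$-invariant vectors, namely $\{0\}$; in other words $\a_b|_H$ has \emph{at most one} fixed point. In particular the $A$-fixed point $v_0$ above is the unique fixed point of $\a_b|_A$.

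I would then prove by transfinite induction on $0 \leq t \leq \tau$ that $v_0$ is fixed by $N_t$. The base case $N_0 = A$ holds by the construction of $v_0$. For $t > 0$ set $M := \Cup{s<t}{}N_s$; by the inductive hypothesis every $N_s$ with $s < t$ fixes $v_0$, so $M$ fixes $v_0$, and since $A = N_0 \leq M$ the point $v_0$ is the unique $M$-fixed point. Because $M \norm N_t$, the group $N_t$ permutes the $M$-fixed-point set: for $n \in N_t$ and $m \in M$ one has $\a_b(m)\a_b(n)v_0 = \a_b(n)\a_b(n^{-1}mn)v_0 = \a_b(n)v_0$, using $n^{-1}mn \in M$, so $\a_b(n)v_0$ is again $M$-fixed and therefore equals $v_0$. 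Thus $N_t$ fixes $v_0$, completing the induction. Taking $t = \tau$ shows that $\G = N_\tau$ fixes $v_0$, so $b$ is a global coboundary and $H^1(\G,\pi)=0$.

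The only real content is the uniqueness observation of the second paragraph; granting it, the normal descent step degenerates to the triviality that a group acting on a one-element set acts trivially, and the limit stages require nothing beyond the fact that a union of subgroups each fixing $v_0$ still fixes $v_0$. The point to handle with care is therefore purely the transfinite bookkeeping, namely checking that the single point $v_0$ chosen at the base persists unchanged through both successor and limit ordinals, rather than any analytic estimate; in particular no circumcenter or boundedness argument is needed, since the fixed point is produced directly.
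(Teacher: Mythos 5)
Your argument is correct and is essentially the paper's own proof translated into the language of fixed points of the affine action $\a_b$: your uniqueness observation (no nontrivial $A$-invariant vectors forces at most one fixed point for any intermediate subgroup) combined with the normality step is exactly the content of Lemma \ref{cocind}, and your transfinite induction along the chain $\{N_t\}$ is the paper's minimal-counterexample argument run forwards. If anything, carrying the single point $v_0$ explicitly through the limit ordinals makes the union step slightly more transparent than the paper's phrasing, but the mathematics is the same.
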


\begin{prop}\label{wqn}
Let $(\G,A)$ be a group-subgroup pair, and $A$ infinite and weakly quasi-normal in $\G$. If $\pi \: \G \to \Un(\H)$ is a $C_0$-unitary representation and $H^1(\G,A, \pi)=0$ then $ H^1(\G, \pi) = 0$.
\end{prop}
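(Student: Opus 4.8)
The plan is to fix an arbitrary cocycle $b\in\Zi(\G,\pi)$, pass to its affine action $\a_b$, and show that the set of points fixed by $N_t$ never shrinks as $t$ grows. Recall that $H^1(\G,A,\pi)=0$ means every cocycle is a coboundary on $A=N_0$, which by the circumcenter argument is the same as saying $\a_b$ has a point fixed by $A$; dually, $[b]=0$ in $H^1(\G,\pi)$ is equivalent to $\a_b$ having a point fixed by all of $\G=N_\tau$. For a subgroup $H\leq\G$ I write $\mathrm{Fix}(H)=\{v\in\H : \a_b(h)v=v\text{ for all }h\in H\}$, a closed affine subspace. I would prove the single transfinite statement $\mathrm{Fix}(N_t)=\mathrm{Fix}(N_0)$ for all $0\leq t\leq\tau$; taking $t=\tau$ and recalling that $\mathrm{Fix}(N_0)\neq\emptyset$ by hypothesis then yields $[b]=0$, and since $b$ was arbitrary, $H^1(\G,\pi)=0$.

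I would argue by transfinite induction on $t$. The base case $t=0$ is a tautology, and the limit case is essentially free: if $t$ is a limit ordinal then $N_t=\Cup{s<t}{}N_s$, so $\mathrm{Fix}(N_t)=\Cap{s<t}{}\mathrm{Fix}(N_s)$, and by the inductive hypothesis every term of this intersection equals $\mathrm{Fix}(N_0)$. Note that this side-steps the usual danger at a limit step — a decreasing net of nonempty closed convex sets can have empty intersection — precisely because the successor step will have shown that consecutive fixed-point sets are \emph{equal}, not merely nested and nonempty.

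The crux is the successor step $t=s+1$, and this is where the $C_0$ hypothesis enters. Since $N_s\subseteq N_{s+1}$ we have $\mathrm{Fix}(N_{s+1})\subseteq\mathrm{Fix}(N_s)$ automatically, so it suffices to prove the reverse inclusion. Fix $v_0\in\mathrm{Fix}(N_s)$ and a generator $\g$ of $N_{s+1}$, that is, one with $L:=N_s\cap\g N_s\g^{-1}$ infinite. For $h\in L$ write $h=\g a\g^{-1}$ with $a\in N_s$; then $\a_b(h)\a_b(\g)v_0=\a_b(\g a)v_0=\a_b(\g)v_0$ since $a$ fixes $v_0$. Hence both $v_0$ and $\a_b(\g)v_0$ are fixed by $\a_b(h)$ for every $h\in L$, and since the linear part of an affine isometry acts on differences, the vector $w:=\a_b(\g)v_0-v_0$ satisfies $\pi(h)w=w$ for all $h\in L$. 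Thus $\<\pi(h)w,w\>=\|w\|^2$ for all $h$ in the infinite set $L$, while $C_0$-ness forces $\<\pi(h)w,w\>\to0$ as $h\to\8$ in $L$; therefore $w=0$, i.e. $\g$ fixes $v_0$. As this holds for every generator of $N_{s+1}$ and the stabilizer of $v_0$ is a subgroup, $v_0\in\mathrm{Fix}(N_{s+1})$, giving $\mathrm{Fix}(N_s)\subseteq\mathrm{Fix}(N_{s+1})$ and hence equality.

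The genuine obstacle is entirely concentrated in this successor step: without a spectral-type hypothesis the vector $w$ need not vanish, and it is exactly the infinitude of $L=N_s\cap\g N_s\g^{-1}$ — guaranteed by weak quasi-normality, with every $N_s$ infinite because it contains $A$ — together with the vanishing of matrix coefficients that closes the argument. Once the successor and limit steps are in place, the induction gives $\mathrm{Fix}(\G)=\mathrm{Fix}(A)\neq\emptyset$, so every cocycle is a global coboundary and $H^1(\G,\pi)=0$.
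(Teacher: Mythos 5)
Your proof is correct and is essentially the paper's own argument recast in the fixed-point picture: your successor step is exactly the paper's computation showing that $\pi(\tilde\g)$ fixes the vector $\pi(h)v - v - b(h)$ for the infinitely many $\tilde\g \in N_s \cap h N_s h^{-1}$, with $C_0$-ness then forcing that vector to vanish. The one small difference is that your stronger induction hypothesis $\mathrm{Fix}(N_t)=\mathrm{Fix}(N_0)$ makes the limit step a tautology, whereas the paper's minimal-counterexample formulation must invoke $C_0$-ness and the infiniteness of $A$ a second time at limit ordinals to identify the coboundary vectors $v_t$ with $v_0$.
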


We now prove these propositions, first in the case where $A$ is weakly normal and then when it is weakly quasi-normal in $\G$.

\subsection{$A$ weakly normal}

Under  the same assumptions as Proposition \ref{wn}, we prove the following two lemmas, the last of which proves the proposition.

\begin{lemma}\label{cocind}
If $A \leq N \leq \G$ and $b \in \Bi(\G, N,\pi)$ then $b \in \Bi(\G, N',\pi)$ for any subgroup $N'$ of the normalizer of $N$ in $\G$.
\end{lemma}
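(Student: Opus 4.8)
The plan is to pass from the cocycle description to the affine action $\a_b$ and then exploit the standing hypothesis of Proposition \ref{wn} that $\pi$ has no nonzero $A$-invariant vectors. Recall that $b \in \Bi(\G, N, \pi)$ says precisely that the restricted affine action $\a_b|_N$ has a fixed point: if $b(n) = \pi(n)w - w$ for all $n \in N$, then $\vo := -w$ satisfies $\a_b(n)\vo = \pi(n)\vo + b(n) = \vo$. So the first step is to record that the fixed-point set $\H^N := \{v \in \H : \a_b(n)v = v \text{ for all } n \in N\}$ is nonempty.

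Next I would pin down the structure of $\H^N$. It is an affine subspace of $\H$, and the difference of any two of its points is a vector fixed by $\pi(N)$; conversely, adding a $\pi(N)$-fixed vector to a point of $\H^N$ keeps it in $\H^N$. Hence the direction space of $\H^N$ is exactly the space of $N$-invariant vectors of $\pi$. Here is where the hypothesis enters: since $A \leq N$, any $N$-invariant vector is $A$-invariant, and by assumption $\pi$ has no nonzero $A$-invariant vectors. Therefore the direction space is $\{0\}$ and $\H^N$ consists of a single point $\vo$. This uniqueness is the crux of the argument.

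Finally I would use that $N'$ normalizes $N$ to show $\a_b(N')$ preserves $\H^N$. For $g \in N'$, $n \in N$ and $v \in \H^N$, the action identity gives $\a_b(n)\a_b(g)v = \a_b(ng)v = \a_b(g)\a_b(g^{-1}ng)v = \a_b(g)v$, using $g^{-1}ng \in N$. Thus $\a_b(g)v \in \H^N$. Since $\H^N = \{\vo\}$ is a single point, $\a_b(g)\vo = \vo$ for every $g \in N'$, i.e. $\vo$ is an $N'$-fixed point of $\a_b$. Unwinding, $b(g) = \vo - \pi(g)\vo = \pi(g)(-\vo) - (-\vo)$ for all $g \in N'$, which is exactly the statement $b \in \Bi(\G, N', \pi)$.

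The only real obstacle is the passage to a single fixed point: without the no-invariant-vectors hypothesis, $\H^N$ could be a positive-dimensional affine subspace on which $N'$ acts by a possibly fixed-point-free isometric action, and the conclusion would genuinely fail (take $N$ trivial). Everything else is a direct manipulation of the cocycle identity.
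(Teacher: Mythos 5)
Your proof is correct and is essentially the paper's argument recast in the affine-action language: the paper's direct cocycle computation shows that the vector $v+b(\g)-\pi(\g)v$ (which is exactly $\a_b(\g)\vo-\vo$ for the fixed point $\vo=-v$) is $N$-invariant and hence zero, which is precisely your ``unique fixed point of $\a_b|_N$ is preserved by the normalizer'' step. Both hinge on the same two ingredients --- normality to conjugate, and the absence of nonzero $A$-invariant vectors to force uniqueness --- so there is nothing to add.
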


\begin{proof}
Suppose that $b \in \Bi(\G, N,\pi)$ and $v \in \H$ such that $b(a) = \pi(a)v-v$ for every $a \in N$. Now, fix $\g \in N'$ and we have:
 \begin{eqnarray}
b(\g^{-1} a \g) &= & b(\g^{-1} a) + \pi(\g^{-1} a) b(\g)  \nonumber \\
 & = & b(\g^{-1}) + \pi(\g^{-1})b(a) +\pi(\g^{-1} a)b(\g) \nonumber \\
= \pi(\g^{-1} a \g)v - v & = & b(\g^{-1}) + \pi(\g^{-1})( \pi(a) v - v) + \pi(\g^{-1} a)b(\g) \nonumber
\end{eqnarray}

Recalling the fact that $b(\g^{-1}) = -\pi(\g^{-1}) b(\g)$ is a consequence of the cocycle identity, we multiply the last line above by $\pi(\g)$ and get:

\begin{eqnarray}
\pi(a)(v+ b(\g) - \pi(\g)v) & = & v+ b(\g) - \pi(\g)v. \nonumber
\end{eqnarray}

Since $\pi$ has no nontrivial $A$-invariant vectors, and the last equation holds for every $a\in N \geq A$ it follows that $v+ b(\g) - \pi(\g)v=0$, that is to say $ b(\g) = \pi(\g)v -v$ and therefore, $b \in \Bi(\G, N',\pi)$.
\end{proof}

\begin{lemma}
For every $b \in \Zi(\G, \pi)$ either $b \in \Bi(\G, \G, \pi)$ or $b \notin \Bi(\G, A, \pi)$.
\end{lemma}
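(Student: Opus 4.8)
The plan is to recognize that the stated dichotomy is merely the contrapositive form of the inclusion $\Bi(\G, A, \pi) \subseteq \Bi(\G, \G, \pi)$. Since the reverse inclusion $\Bi(\G, \G, \pi) \subseteq \Bi(\G, A, \pi)$ is automatic from the definitions (a coboundary on all of $\G$ is in particular a coboundary on $A$), proving the lemma amounts to showing that any cocycle which is a coboundary on $A$ is already a genuine coboundary on all of $\G$. I would therefore fix $b \in \Bi(\G, A, \pi)$ and run a transfinite induction along the weakly-normal chain $\{N_t : 0 \leq t \leq \tau\}$, with the goal of establishing $b \in \Bi(\G, N_t, \pi)$ for every $t$; the case $t = \tau$ then gives $b \in \Bi(\G, N_\tau, \pi) = \Bi(\G, \G, \pi)$. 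The base case $t=0$ is exactly the hypothesis $b \in \Bi(\G, A, \pi)$, and for $t > 0$ I would assume $b \in \Bi(\G, N_s, \pi)$ for all $s < t$.

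The engine that drives the inductive step is a uniqueness observation. If $b$ is a coboundary on a subgroup $N \geq A$, say $b(a) = \pi(a)v - v$ for all $a \in N$, then the implementing vector $v$ is unique: any second such vector $w$ satisfies $\pi(a)v - v = \pi(a)w - w$, hence $\pi(a)(v-w) = v-w$ for all $a \in A$, and the standing hypothesis that $\pi$ has no nontrivial $A$-invariant vectors forces $v - w = 0$. I would use this first to pass to unions. Writing $M_t := \Cup{s<t}{}N_s$, the uniqueness together with the nesting of the chain shows that the coboundary vectors associated to the various $N_s$ all coincide with a single vector $v$; consequently $b(a) = \pi(a)v - v$ for every $a \in M_t$, so $b \in \Bi(\G, M_t, \pi)$.

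It remains to upgrade from $M_t$ to $N_t$, which is where the normality in the definition of weak normality enters. For each $0 < t \leq \tau$ the chain satisfies $M_t \norm N_t$, so $N_t$ lies in the normalizer of $M_t$ in $\G$, and $A = N_0 \leq M_t$ since $0 < t$. Lemma \ref{cocind} applies verbatim with $N = M_t$ and $N' = N_t$, promoting $b \in \Bi(\G, M_t, \pi)$ to $b \in \Bi(\G, N_t, \pi)$ and closing the induction. Note that this handles successor and limit stages uniformly, as the condition $M_t \norm N_t$ is imposed for all $t > 0$.

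The step I expect to be the crux is the uniqueness of the coboundary vector, since it is precisely what permits gluing the vectors attached to the $N_s$ into one vector over the union $M_t$: without the no-$A$-invariant-vectors hypothesis these vectors could genuinely differ, and the argument would break down at limit ordinals where no largest $N_s$ is available to anchor the choice. Everything else is essentially bookkeeping --- the automatic inclusion $\Bi(\G,\G,\pi) \subseteq \Bi(\G,A,\pi)$ and the mechanical invocation of Lemma \ref{cocind} at each stage.
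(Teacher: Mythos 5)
Your proof is correct and follows essentially the same route as the paper's: the paper phrases it as a minimal-counterexample argument over the set $\{t : b \notin \Bi(\G, N_t, \pi)\}$ rather than a direct transfinite induction, but both reduce the step from $\Cup{s<t}{}N_s$ to $N_t$ to Lemma \ref{cocind}. Your explicit uniqueness-of-the-implementing-vector argument at limit stages is a point the paper leaves implicit, and it is exactly the right justification for gluing over the union.
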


\begin{proof}
Suppose that $b \notin \Bi(\G, \G, \pi)$. Then the set $\{t\: b\notin \Bi(\G, N_t, \pi)\}$ contains $\tau$ (recall that $N_\tau = \G$) and therefore the set is nonempty and must have a minimal element $t_0$. If $t_0= 0$ then we are done. Otherwise, $b \in  \Bi(\G, \Cup{s<t_0}{}N_s, \pi)$ but $b \notin \Bi(\G, N_{t_0}, \pi)$ which contradicts the previous lemma since $ \Cup{s<t_0}{}N_s \norm N_{t_0}$.
\end{proof}

\subsection{wq-Normality}

Under  the same assumptions, the following proves Proposition \ref{wqn}.

\begin{lemma}
Let $A \leq \G$ be a wq-normal subgroup, with $A$ infinite and $\pi \: \G \to \Un(\H)$ a  $C_0$-representation. Then for every $b \in \Zi(\G, \pi)$ either $b \in \Bi(\G, \G, \pi)$ or $b \notin \Bi(\G, A, \pi)$.
\end{lemma}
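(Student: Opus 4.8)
The plan is to mirror the weakly normal argument of the previous subsection: first establish a ``commensurating'' analogue of Lemma \ref{cocind}, then run the identical transfinite minimality argument along the filtration $\{N_t\}$. Concretely, if $b \notin \Bi(\G,\G,\pi)$ then the set $S = \{t : b \notin \Bi(\G, N_t, \pi)\}$ contains $\tau$, hence has a least element $t_0$; if $t_0 = 0$ we are done since $N_0 = A$, and otherwise we derive a contradiction by promoting a coboundary on $N_{t_0-1}$ (resp. on each $N_s$, $s < t_0$) to a coboundary on $N_{t_0}$. The two cases of the wq-normal filtration, non-limit and limit ordinals, are handled separately.

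The key lemma I would prove first is the following replacement for Lemma \ref{cocind}: if $\pi$ is $C_0$ and $b \in \Bi(\G, N, \pi)$, then $b \in \Bi(\G, N', \pi)$ where $N' = \< \g : |N \cap \g N \g^{-1}| = \8\>$. Fix $v$ with $b(a) = \pi(a)v - v$ for all $a \in N$, and fix a generator $\g$, so that $M := N \cap \g N \g^{-1}$ is infinite. For $a \in M$ both $a$ and $\g^{-1}a\g$ lie in $N$, so the coboundary formula applies to both; carrying out the same manipulation as in Lemma \ref{cocind} (expanding $b(\g^{-1}a\g)$ via the cocycle identity, substituting $b(a) = \pi(a)v - v$, and multiplying through by $\pi(\g)$) yields $\pi(a) w_\g = w_\g$ for the single vector $w_\g := \pi(\g)v - v - b(\g)$, valid for every $a \in M$. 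Here is the crucial point, and the step I expect to be the main obstacle: in the weakly normal case one concluded $w_\g = 0$ because invariance held for all $a \in N \supseteq A$ and $\pi$ had no $A$-invariant vectors, but now invariance holds only over the infinite subgroup $M$. This is exactly where the $C_0$ hypothesis enters: since $w_\g$ is fixed by $M$, we have $\<\pi(m)w_\g, w_\g\> = \|w_\g\|^2$ for all $m \in M$, while the $C_0$ condition forces $\<\pi(m)w_\g, w_\g\> \to 0$ as $m \to \8$ inside the infinite set $M$; hence $w_\g = 0$ and $b(\g) = \pi(\g)v - v$. Since the set $\{g : b(g) = \pi(g)v - v\}$ is a subgroup containing all the generators $\g$, it contains $N'$, giving $b \in \Bi(\G, N', \pi)$.

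With the key lemma in hand, the non-limit case is immediate: by minimality $b \in \Bi(\G, N_{t_0-1}, \pi)$, and since $N_{t_0} = \< \g : |N_{t_0-1} \cap \g N_{t_0-1}\g^{-1}| = \8\>$ the lemma gives $b \in \Bi(\G, N_{t_0}, \pi)$, contradicting $t_0 \in S$. For the limit case, where $N_{t_0} = \Cup{s < t_0}{}N_s$, minimality gives for each $s < t_0$ a vector $v_s$ with $b(a) = \pi(a)v_s - v_s$ on $N_s$, and I would then check these are consistent: for $s < s'$ the difference $v_s - v_{s'}$ is fixed by $N_s \supseteq A$, and since $A$ is infinite the $C_0$ hypothesis (again via vanishing matrix coefficients) forces $v_s = v_{s'}$. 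Thus a single $v$ works simultaneously on every $N_s$, hence on their union $N_{t_0}$, so $b \in \Bi(\G, N_{t_0}, \pi)$, the final contradiction. In both places the only property of $\pi$ actually used is that no infinite subgroup admits a nonzero invariant vector, which is precisely what $C_0$ guarantees and what distinguishes this argument from the weakly normal one.
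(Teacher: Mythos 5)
Your proposal is correct and follows essentially the same route as the paper: the same transfinite minimality argument on the filtration $\{N_t\}$, the same conjugation computation producing a vector fixed by the infinite intersection $N\cap \g N\g^{-1}$ (resp.\ by $A$ in the limit-ordinal case), and the same appeal to the $C_0$ hypothesis to kill that vector. The only difference is presentational --- you isolate the successor step as a standalone analogue of Lemma \ref{cocind}, while the paper inlines it --- and your explicit justification of why $C_0$ forces the fixed vector to vanish is a welcome detail the paper leaves implicit.
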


\begin{proof}
Let $\{N_t: 0\leq t \leq \tau\}$ be the well ordered collection of subgroups guaranteed by the definition of  weak quasi-normality for $A\leq \G$.

Let $b\notin  \Bi(\G, \G, \pi)$. We aim to show that $b\notin  \Bi(\G, A, \pi)$. Consider the set $\{t\: b\notin \Bi(\G, N_t, \pi)\}$. As this set contains $\tau$ it is not empty and hence contains a minimal element $t_0$. If $t_0=0$, then we are done. By contradiction suppose that $t_0>0$. 

We claim that $t_0$ is not a limit ordinal. If $t_0$ is a limit ordinal,  then $N_{t_0} = \Cup{t<t_0}{} N_t$. Let $v_t\in \H$ be the vector making  $b|_{N_t} (\g) = \pi(\g)v_t - v_t$ a relative coboundary on each $N_t$. Then $v_t - v_0$ is an invariant vector for the infinite subgroup $N_0= A$ and since the representation $\pi$ is $C_0$  it must be that $v_t = v_0$, which shows that the restriction of $b$ to $N_{t_0}$ is a coboundary, contradicting the minimality of $t_0$.  

Therefore $t_0$ is not a limit ordinal and by minimality, the restriction of $b$ to $N_{t_0-1}$  must be a coboundary. 

Recall that $N_{t_0}$ is generated by a set $S_{t_0}$ where every $h \in S_{t_0}$ has the property that $$|N_{t_0-1}\cap hN_{t_0-1}h^{-1}| = \8.$$ We now show that there is a $v\in \H$ such that $b(h) = \pi(h)v-v$ for each $h \in S_{t_0}$. The cocycle relation then implies that $b \in \Bi(\G, N_{t_0}, \pi)$ contradicting the minimality of $t_0$.

Let $v\in \H$ such that $b|_{ N_{t_0-1}}(\g) = \pi(\g)v-v$. For $h \in S_{t_0}$ consider $\~\g \in h N_{t_0-1} h^{-1} \cap N_{t_0-1}$. Then, $\~\g = h\g h^{-1}$ for some $\g \in N_{t_0-1}$ and so

\begin{eqnarray*}
\pi(\~\g)v - v\enspace=\enspace b(\~\g)& =& b(h\g h^{-1}) \\
&=& b(h) + \pi(h) (\pi(\g)v-v) - \pi(h)(\pi(\g)\pi(h^{-1})b(h)) \\
&=& b(h) + \pi(\~\g)\pi(h)v - \pi(h) v - \pi(\~\g)b(h).
\end{eqnarray*}

Rearranging terms we have that

\begin{eqnarray*}
\pi(\~\g)(\pi(h)v - v - b(h))& =& \pi(h)v - v - b(h). 
\end{eqnarray*}

Of course this holds for infinitely many $\~\g \in h N_{t_0-1} h^{-1} \cap N_{t_0-1}$ and since $\pi$ is $C_0$ we conclude that $b(h) = \pi(h)v -v$.

\end{proof}

\section{Application to $\EL_n(\R)$ for $n \geq 3$}

Let $\R$ be a countable (not necessarily commutative) ring with unit and characteristic 0. Consider the group $\EL_n(\R)$ generated by elementary matrices and $n\geq 3$. We have many options here for a weakly quasi-normal subgroup. For example, since $1\in \R$ it follows that $\EL_n(\R) \geq \SL_n(\Z)$. Instead, we choose $A = \< E_{i,n}(1): i = 1, \dots, n-1\> \cong \Z^{n-1}$ to be the subgroup generated by the elementary matrices corresponding to the last column. Of course, $A$ is contained in a copy of $\SL_{n-1}(\Z) \ltimes A$ inside $\EL_n(\R)$ and therefore,  $(\EL_n(\R), A)$ has relative property (T). 

Consider  $$N_1 = \< E_{i,j}(r): r\in \R, i = 1, \dots, n-1, j= 1, \dots, n, \text{ and }  i\neq j\> \cong \EL_{n-1}(\R)\ltimes \R^{n-1}$$ and $$N_2 = \< E_{i,j}(r): r\in \R, i = 1, \dots, n, j= 1, \dots, n,  \text{ and }  i\neq j\> \cong \EL_n(\R).$$

For each $i_0$, we see that the infinite cyclic subgroup $A_{i_0} = \< E_{i_0,n}(1)\>$ is centralized by $$C_{i_0} := \{E_{i_0,j}(r): r\in \R,  j=1, \dots n-1, \text{ and } j \neq i_0\} \cup \{E_{i,n}(r): r\in \R \text{ and } i\neq i_0 \}.$$

This means that $\g A \g^{-1} \cap A \supset A_{i_0}$ for each $\g \in C_{i_0}$ and hence these intersections are infinite. 

Now, as $N_2$ is generated by $N_1$ together with $\{E_{n,j}(r): r\in \R  \text{ and }   j=1, \dots n-1\}$ we need only see that $E_{n,j}(\cdot)$ centralizes some infinite subgroup of $N_1$ for each $j \neq n$. But of course, as before, $E_{n,j}(\cdot)$ commutes with $E_{i,j}(\cdot)$ for any $i \neq n, j$.

\begin{cor}
For $n\geq 3$ and $\R$ a unital ring of characteristic 0, the first $\ell^2$-Betti number $\b_1(\EL_n(\R))=0$. 
\end{cor}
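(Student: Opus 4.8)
The plan is to obtain the Corollary by applying the weakly quasi-normal case of Theorem~\ref{Th RelT ell2 Betti} to the pair $(\EL_n(\R), A)$, where $A = \< E_{i,n}(1) : i = 1, \dots, n-1\>$ is the last-column subgroup singled out above. Three hypotheses must be checked: that $A$ is infinite, that $(\EL_n(\R), A)$ has relative property (T), and that $A$ is weakly quasi-normal in $\EL_n(\R)$. Granting these, the theorem yields $\b_1(\EL_n(\R)) = 0$ at once.

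For the first two, the characteristic-$0$ hypothesis is what does the work: it supplies an embedding $\Z \hookrightarrow \R$, so that every cyclic group $\< E_{i,j}(m) : m \in \Z\> \cong \Z$ is infinite, and in particular $A \cong \Z^{n-1}$ is infinite. For relative property (T), I would locate the classical pair inside $\EL_n(\R)$. Since $1 \in \R$, the top-left block gives a copy of $\SL_{n-1}(\Z)$ which normalizes $A$ by the standard action, producing a copy of $\SL_{n-1}(\Z) \ltimes \Z^{n-1}$ with $A$ as its abelian factor. The pair $(\SL_m(\Z) \ltimes \Z^m, \Z^m)$ has relative property (T) for $m \geq 2$, which is exactly where $n \geq 3$ enters. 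Relative property (T)---equivalently relative FH by Theorem~\ref{relTrelFH}---is inherited by overgroups: restricting any isometric $\EL_n(\R)$-action to $\SL_{n-1}(\Z) \ltimes A$ produces an $A$-fixed point, so $(\EL_n(\R), A)$ has relative property (T).

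Weak quasi-normality I would verify through the two-step chain $A = N_0 \leq N_1 \leq N_2 = \EL_n(\R)$ indicated above, with $N_1 \cong \EL_{n-1}(\R) \ltimes \R^{n-1}$ and $N_2 = \EL_n(\R)$. The engine is the elementary observation that if $\g$ centralizes an infinite subgroup of $N_{t-1}$ then $|N_{t-1} \cap \g N_{t-1} \g^{-1}| = \8$, so $\g$ lies in the group $N_t$ prescribed by the definition. Passing from $N_0$ to $N_1$: every generator in the families $C_{i_0}$ centralizes the infinite cyclic group $A_{i_0} = \< E_{i_0,n}(1)\> \leq A$, so all of $\EL_{n-1}(\R) \ltimes \R^{n-1}$ is absorbed into $N_1$. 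Passing from $N_1$ to $N_2$: each remaining generator $E_{n,j}(r)$ (with $j \neq n$) commutes with the infinite subgroup $\< E_{i,j}(r) : r \in \R\> \leq N_1$ for any index $i \notin \{n,j\}$---and such an $i$ exists precisely because $n \geq 3$---so these generators enter $N_2$ and, together with $N_1$, generate all of $\EL_n(\R)$.

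The substantive input, rather than any genuine obstacle, is the classical relative property (T) of $(\SL_{n-1}(\Z) \ltimes \Z^{n-1}, \Z^{n-1})$ together with its inheritance by the overgroup $\EL_n(\R)$; the remaining steps are routine, the only points demanding attention being the existence of an index outside $\{n,j\}$, which is what forces $n \geq 3$, and the infiniteness of the relevant cyclic subgroups, which is what forces characteristic $0$.
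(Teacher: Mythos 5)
Your proposal is correct and follows essentially the same route as the paper: the same subgroup $A = \< E_{i,n}(1) : i = 1,\dots,n-1\>$, relative property (T) inherited from the embedded copy of $\SL_{n-1}(\Z)\ltimes\Z^{n-1}$, and weak quasi-normality via the same chain $A \leq N_1 \leq N_2 = \EL_n(\R)$ using the same centralizer observations. The only difference is that you make explicit a few points the paper leaves implicit (infiniteness from characteristic $0$, inheritance of relative (T) by overgroups, and where $n \geq 3$ is used), which is fine.
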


%%%%%%%%%%%%%%%%%%%

\bibliography{bib}
\bibliographystyle{alpha}

\end{document}